\documentclass[oneside,11pt,reqno]{amsart}
\usepackage{amssymb,amsmath,amsthm,bbm,enumerate,mdwlist,url,multirow}
\usepackage[pdftex]{graphicx}
\usepackage{hyperref}

\addtolength{\hoffset}{-1.5cm}
\textwidth 16.5cm
\addtolength{\textheight}{1.8cm}
\topmargin -0.2cm
\sloppy

\linespread{1.3}
\newtheorem{definition}{Definition}
\newtheorem{proposition}[definition]{Proposition}
\newtheorem{theorem}[definition]{Theorem}
\newtheorem{remark}[definition]{Remark}
\newtheorem{corollary}[definition]{Corollary}
\numberwithin{equation}{section}
\numberwithin{definition}{section}
\def\PP{\mathsf P}
\def\EE{\mathsf E}
\def\supp{\mathrm{supp}}

\def\ZZ{\mathbb{Z}}
\def\LL{\mathcal L}
\def\tl{R}
\bibliographystyle{amsplain}
\begin{document}
\title{A note on the times of first passage for `nearly right-continuous' random walks}

\author{Matija Vidmar}
\address{Department of Statistics, University of Warwick, UK}
\email{m.vidmar@warwick.ac.uk}

\thanks{The support of the Slovene Human Resources Development and Scholarship Fund under contract number 11010-543/2011 is acknowledged. 
}

\begin{abstract}
A natural extension of a right-continuous integer-valued random walk is one which can jump to the right by one or two units. First passage times above a given fixed level then admit a tractable Laplace transform (probability generating function). 
Explicit expressions for the probabilities that the respective overshoots are either $0$ or $1$, according as the random walk crosses a given level for the first time either continuously or not, also obtain. An interesting non-obvious observation, which follows from the analysis, is that any such (non-degenerate) random walk will, eventually in $n\in \mathbb{N}\cup \{0\}$, always be more likely to pass over the level $n$ for the first time with overshoot zero, rather than one. Some applications are considered. 
\end{abstract}

\keywords{Random walks, L\'evy processes, first entrance/passage times, fluctuation theory}

\subjclass[2010]{Primary: 60G50; Secondary: 60G51} 

\maketitle

\section{Introduction}\label{section:Introduction}
It is well-known that, within the class of integer-valued random walks, those which can jump to the right by only one unit, are singled out in terms of having a more tractable fluctuation theory \cite{brown,quine}  \cite[Section~4]{vylder} \cite[Section~7]{dickson} \cite[Section~9.3]{doney} \cite[\emph{passim}]{spitzer}. For their defining property, they are called `right-continuous' or also `skip-free to the right'. In particular, first passage times above a given level then admit (semi)explicit Laplace transforms, at every point in terms of a single parameter. This is also by analogy to the spectrally negative class of L\'evy processes  \cite[Chapter VII]{bertoin} \cite[Section~9]{doney} \cite[Chapter 8]{kyprianou} \cite[Section 9.46]{sato}. Indeed, if the right-continuous integer-valued random walk is embedded into continuous time as a compound Poisson process \cite{vidmar:fluctuation}, then together (modulo trivial cases) these two types exhaust the class of L\'evy processes having non-random overshoots \cite{vidmar}, a property by and large responsible for the fluctuation theory then being more explicit. 

It seems natural to ask, then, to what extent fluctuation theory remains (and, for that matter, does not remain) tractable when the demand of non-random overshoots is relaxed. In this paper only the simplest extension is considered, namely we allow the random walk to jump to the right by one or two units (making it `nearly right-continuous', but not quite). Apart from such theoretical considerations, these `nearly right-continuous' random walks also extend some applied queuing and branching models related to right-continuous
random walks, lending further relevance to their study. 

Now, the mandate of this paper is restricted to establishing the Laplace transforms of the times of first passage above a given level, for the type of processes just described. It emerges that the values of the latter are determined at each point by two parameters, them in turn being characterized precisely in terms of the deterministic characteristics of the process. 
Moreover, along the way, we obtain the probabilities of the random walk crossing a given level for the first time either continuously or not. A couple of colloraries to these findings are made explicit, and some applications are considered.



As regards the presentation of the remainder of this paper, for (notational) convenience, and also to make the connection to the fluctuation theory of L\'evy processes straightforward, we shall mostly work in the continuous-time compound Poisson setting. With this proviso, the main result of the paper is stated in Theorem~\ref{theorem} of Section~\ref{section:setting}, which also introduces the setting and fixes notation (Corollary~\ref{remark:remark} gives the result for the discrete-time setting and follows immediately from its continuous counter-part). Section~\ref{section:proof} contains the (not too difficult, still non-trivial) proof. In Section~\ref{section:application} we briefly remark upon some applications. Section~\ref{section:conclusion} concludes. 

\section{Setting, notation and statement of result}\label{section:setting}
The law of an integer-valued random walk $(W_k)_{k\geq 0}$, starting at $W_0=0$, is characterized through its transition probabilities $p_n:=\PP(W_1=n)$ ($n\in\mathbb{Z}$). For simplicity, insist on $p_0=0$. It is straightforward to see such a random walk as a continuous-time L\'evy (indeed, compound Poisson) process, by taking the L\'evy measure $\lambda:=\sum_{n\in\mathbb{Z}}p_n\delta_n$. Of course, the choice of the lattice $\mathbb{Z}$ rather than $h\mathbb{Z}$ (for some $h>0$) and the normalization $\lambda(\mathbb{R})=1$ are arbitrary. They are also natural and inconsequential, representing merely, and respectively, scalings of space and time.\footnote{Note that in this setting making $p_0$ non-zero corresponds simply to the lengthening of the expected holding period between jumps by a factor of $1/(1-p_0)$. 
}

It is thus no restriction if in the sequel we consider given on a probability space $(\Omega,\mathcal{F},\PP)$ a $\ZZ$-valued c\`adl\`ag L\'evy (hence compound Poisson) process $X$. Its L\'evy measure, which we insist on being of total mass $1$, will be denoted $\lambda$. Note we have not immediately assumed $X$ to be `nearly right-continuous' -- indeed, some auxiliary results will neither depend on this assumption, nor does proving or stating them for an arbitrary $\ZZ$-valued compound Poisson process of unit intensity represent any further discomfort.  For L\'evy processes refer to \cite{bertoin,sato} and with regard to their fluctuation theory \cite{doney,kyprianou}. The first passage times are defined as $T_x:=\inf\{t\geq 0: X_t\geq x\}$, $x\in \mathbb{R}$. 

It is trivial that for $t\geq 0$, $X_t$ admits exponential moments of all nonnegative orders, whenever $\lambda$ does so. Then $\EE[e^{\beta X_t}]=e^{t\psi(\beta)}$ (for $\beta\geq 0$, $t\geq 0$), where $\psi$, the Laplace exponent, given by $\psi(\beta):=\int\lambda(dx)(e^{\beta x}-1)$ (for $\beta\geq 0$), is continuous (dominated convergence). We make the standing assumption that $\lambda$ charges $(0,\infty)$, 
which makes $\psi$ strictly convex (as follows via differentiation under the integral sign and the continuity of $\psi$) with $\lim_{+\infty}\psi=+\infty$. Hence, letting $\Phi(0)$ be the largest zero of $\psi$, $\psi\vert_{[\Phi(0),\infty)}:[\Phi(0),\infty)\to [0,\infty)$ is an increasing continuous  bijection; we may define $\Phi:=(\psi\vert_{[\Phi(0),\infty)})^{-1}$. Finally, when $\lambda$ has support bounded from above, we introduce $\Theta(\beta):=\int\lambda(dn)(\beta^n-1)$ (for $\beta\in \mathbb{R}\backslash (-1,1)$). 

The result may now be stated.

\begin{theorem}\label{theorem}
Suppose $X$ is `nearly skip-free to the right', i.e. $\supp( \lambda\vert_{\mathcal{B}((0,\infty))})\subset \{1,2\}$. Assume furthermore that $2\in \supp(\lambda)$ (so we are excluding the skip-free version) and $\supp (\lambda)\not\subset 2\ZZ$ (which is again the skip-free version but on double the lattice). 
\begin{enumerate}[(a)]
\item For every $q\geq 0$, such that $\Phi(q)>0$, and then for every $n\in \mathbb{N}\cup \{0\}$: 
\begin{equation*}
\EE[e^{-q T_n}\mathbbm{1}(T_n<\infty)]=\frac{1-\lambda_-(q)}{\lambda_+(q)-\lambda_-(q)}\lambda_+(q)^{n+1}-\frac{1-\lambda_+(q)}{\lambda_+(q)-\lambda_-(q)}\lambda_-(q)^{n+1},
\end{equation*} where $1/\lambda_-(q)$ is the unique zero of $\Theta-q$ on $(-\infty,-1)$ and $1/\lambda_+(q)=e^{\Phi(q)}$ is the unique zero of $\Theta-q$ on $(1,\infty)$. Further, the following inequalities hold: $-\lambda_+(q)<\lambda_-(q)<0<\lambda_+(q)<1$. \label{theorem:a}
\noindent If $\Phi(0)=0$, then for each $n\in\mathbb{N}\cup \{0\}$, $\PP(T_n<\infty)=1$.
\item For each $n\geq 0$: $\PP(X(T_n)=n,T_n<\infty)=\frac{1}{\lambda_+(0)-\lambda_-(0)}(\lambda_+(0)^{n+1}-\lambda_-(0)^{n+1})$ and $\PP(X(T_n)=n+1,T_n<\infty)=\frac{-\lambda_+(0)\lambda_-(0)}{\lambda_+(0)-\lambda_-(0)}(\lambda_+(0) ^n-\lambda_-(0)^n)$. Here $1/\lambda_+(0)=e^{\Phi(0)}$ is the largest zero of $\Theta$ on $[1,\infty)$ (where it has at most one in addition to $1$), and $1/\lambda_-(0)$ is the unique zero of $\Theta$ on $(-\infty,-1)$. In addition: $-\lambda_+(0)<\lambda_-(0)<0<\lambda_+(0)\leq 1$. \label{theorem:b}
\item We have: $$\lim_{n\to\infty}\frac{\PP(X(T_n)=n+1\vert T_n<\infty)}{\PP(X(T_n)=n\vert T_n<\infty)}=-\lambda_-(0)\in (0,1).$$\label{theorem:c}
\item\label{theorem:d} $X$ drifts to $+\infty$, oscillates or drifts to $-\infty$ \cite[pp. 255-256, Proposition~37.10 and Definition~37.11]{sato}, according as to whether $\psi'(0+)>0$, $\psi'(0+)=0$ or $\psi'(0+)<0$.
\end{enumerate}
\end{theorem}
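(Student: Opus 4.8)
The whole of Theorem~\ref{theorem} reduces to computing, for $q\ge 0$ and $n\in\mathbb{N}\cup\{0\}$, the quantities $g_0(n):=\EE[e^{-qT_n}\mathbbm{1}(X(T_n)=n,T_n<\infty)]$ and $g_1(n):=\EE[e^{-qT_n}\mathbbm{1}(X(T_n)=n+1,T_n<\infty)]$: near skip-freeness forces $X(T_n)\in\{n,n+1\}$ on $\{T_n<\infty\}$, so~(\ref{theorem:a}) asks for $g_0(n)+g_1(n)$, (\ref{theorem:b}) is the pair at $q=0$, and~(\ref{theorem:c}) is $\lim_n g_1(n)/g_0(n)$ at $q=0$. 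I would dispatch~(\ref{theorem:d}) first, as it is needed below and is essentially standard: since the positive jumps are bounded, monotone convergence gives $\psi'(0+)=\EE[X_1]\in[-\infty,\infty)$, and the trichotomy then follows from \cite[Proposition~37.10]{sato}. I record two consequences for later use: $\Phi(0)>0$ forces $\psi'(0+)<0$ (convexity of $\psi$ with $\psi(0)=0$), hence $X$ drifts to $-\infty$; and $\Phi(0)=0$ forces $\psi'(0+)\ge 0$, hence $\limsup_t X_t=+\infty$ a.s., so $T_n<\infty$ a.s.\ for every $n$.

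For $g_0,g_1$ I would set up a recursion via the strong Markov property at $T_n$ and spatial homogeneity. A first crossing of level $n$ either lands exactly on $n$ -- after which an independent fresh crossing of level $1$ from the origin is needed to reach $n+1$ -- or overshoots onto $n+1$, in which case $T_{n+1}=T_n$ and level $n+1$ is crossed exactly. This gives $g_0(n+1)=g_0(1)g_0(n)+g_1(n)$ and $g_1(n+1)=g_1(1)g_0(n)$ with $g_0(0)=1$, $g_1(0)=0$, hence the second-order linear recursion $g_0(n+1)=g_0(1)g_0(n)+g_1(1)g_0(n-1)$ for $n\ge1$. Now $g_1(1)>0$ (the event that the first jump is $+2$ already has $X(T_1)=2$, $T_1<\infty$) and $g_0(1)>0$ (since $\supp(\lambda)\not\subset 2\ZZ$ and $2\in\supp(\lambda)$ force $\gcd(\supp(\lambda))=1$, a short path argument produces level $1$ being hit exactly, in finite time, with positive probability). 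Thus the characteristic polynomial $z^2-g_0(1)z-g_1(1)$ has two distinct real roots $r_+>0>r_-$, with $r_++r_-=g_0(1)>0$ -- so $r_+>|r_-|$ -- and $r_+r_-=-g_1(1)<0$; solving the recursion with the initial conditions yields
\[
g_0(n)=\frac{r_+^{n+1}-r_-^{n+1}}{r_+-r_-},\qquad g_1(n)=\frac{-r_+r_-(r_+^n-r_-^n)}{r_+-r_-},
\]
whence $g_0(n)+g_1(n)=((1-r_-)r_+^{n+1}-(1-r_+)r_-^{n+1})/(r_+-r_-)$ -- already the asserted shape. It remains to identify $r_\pm$ with $\lambda_\pm(q)$.

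For this I would use optional stopping. For $\theta\in\RR\setminus(-1,1)$ with $\Theta(\theta)=q$, the process $\theta^{X_t}e^{-qt}$ is a martingale started at $1$; the stopped process $\theta^{X_{t\wedge T_n}}e^{-q(t\wedge T_n)}$ is bounded by $|\theta|^{n+1}$ (as $X_{t\wedge T_n}\le n+1$ and $|\theta|>1$) and tends to $0$ on $\{T_n=\infty\}$ (there $X_t\le n-1$, and either $q>0$, or $q=0$ with $\Phi(0)>0$ so $X_t\to-\infty$ by~(\ref{theorem:d}), or $\{T_n=\infty\}$ is null when $\Phi(0)=0$). Bounded convergence then gives $\theta^n g_0(n)+\theta^{n+1}g_1(n)=1$ for all $n$; substituting the formulas for $g_0,g_1$ and comparing coefficients in the resulting geometric-sequence identity (using $r_+\ne r_-$, $r_\pm\ne 0$) forces $\theta\in\{1/r_+,1/r_-\}$. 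Conversely, $\Theta-q$ has a zero in $(1,\infty)$, namely $e^{\Phi(q)}$ (by definition of $\Phi$; here the hypothesis $\Phi(q)>0$ in~(\ref{theorem:a}) enters), and a zero in $(-\infty,-1)$ by the intermediate value theorem, since $\Theta(-1)=-2\lambda(\{n:n\text{ odd}\})<0\le q$ (here $\supp(\lambda)\not\subset 2\ZZ$ enters) while $\Theta(-s)\to+\infty$ as $s\to\infty$ (here $2\in\supp(\lambda)$ enters). Matching by sign ($1/r_+>0>1/r_-$) gives $1/r_+=e^{\Phi(q)}$, i.e.\ $r_+=\lambda_+(q)$, and identifies $1/r_-$ as the -- necessarily unique, by the above -- zero of $\Theta-q$ in $(-\infty,-1)$, i.e.\ $r_-=\lambda_-(q)$; the inequalities $-\lambda_+(q)<\lambda_-(q)<0<\lambda_+(q)<1$ then follow from $r_+=e^{-\Phi(q)}\in(0,1)$, $r_-\in(-1,0)$ and $r_++r_->0$. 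This settles~(\ref{theorem:a}). For~(\ref{theorem:b}) ($q=0$) the argument is the same when $\Phi(0)>0$; when $\Phi(0)=0$ the positive zero degenerates to $\theta=1$, and one instead gets $r_+=1=\lambda_+(0)$ either by letting $q\downarrow 0$ in~(\ref{theorem:a}) or from $g_0(n)+g_1(n)=\PP(T_n<\infty)=1$ for all $n$ (which, via the displayed formula and $r_+>|r_-|$, is only possible with $r_+=1$), the negative root being handled as before. Finally,~(\ref{theorem:c}): as $|\lambda_-(0)|<\lambda_+(0)$, the $\lambda_+(0)^n$ terms dominate in $g_1(n)/g_0(n)=-\lambda_+(0)\lambda_-(0)(\lambda_+(0)^n-\lambda_-(0)^n)/(\lambda_+(0)^{n+1}-\lambda_-(0)^{n+1})$, giving the limit $-\lambda_-(0)\in(0,1)$.

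The main obstacle I anticipate is the clean identification of the roots: justifying the optional stopping uniformly over the regimes $q>0$, $q=0$ with $\Phi(0)>0$, and $q=0$ with $\Phi(0)=0$ (the behaviour of the martingale on $\{T_n=\infty\}$), establishing that $\Theta-q$ has exactly one zero in $(-\infty,-1)$, and treating the corner case $q=0$, $\Phi(0)=0$ where $\theta=1$ is a zero of $\Theta$ and the martingale identity degenerates; the remaining steps are routine bookkeeping.
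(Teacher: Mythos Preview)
Your proposal is correct and follows essentially the same route as the paper: set up the two-term recursion for $(g_0,g_1)$ via the strong Markov property, solve it through the characteristic polynomial with roots $r_\pm$, and identify the roots by optional stopping applied to the martingale $\theta^{X_t}e^{-qt}$ for $\theta$ a zero of $\Theta-q$. The one substantive difference is in how the negative root is pinned down. The paper first uses a conditioning-on-the-first-jump identity (its Proposition~\ref{prop:miscellaneous}\eqref{prop:iii}, yielding $(1+q)p_n(q)=\sum_k\lambda(\{k\})p_{n-k}(q)$) to show directly that $\Theta(1/\lambda_-(q))=q$, and only \emph{then} invokes the martingale argument to establish that this zero is unique on $(-\infty,-1)$; you instead manufacture a negative zero by the intermediate value theorem (from $\Theta(-1)=-2\lambda(\{\text{odd}\})<0$ and $\Theta(-s)\to+\infty$) and let the single martingale identity $\theta^n g_0(n)+\theta^{n+1}g_1(n)=1$ do identification and uniqueness in one stroke. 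This is a genuine, if minor, streamlining --- it dispenses with the first-jump conditioning step altogether. For part~(\ref{theorem:d}) the paper goes further than you: rather than simply invoking the standard L\'evy trichotomy via $\psi'(0+)=\EE[X_1]$, it computes $\EE[T_1]=\Phi'(0+)(1-\lambda_-(0))$ explicitly and reads off the drift/oscillation dichotomy from the finiteness of this quantity; your shortcut is sufficient for the statement, but the paper's computation yields extra information.
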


\begin{remark}
\leavevmode
\begin{enumerate}
\item It follows from Theorem~\ref{theorem}\eqref{theorem:a}-\eqref{theorem:b} and the continuity of $\Theta$ (which fact can be seen via dominated convergence, from the very definition of $\Theta$), that $\Theta_{(-\infty,1/\lambda_-(0)]}:(-\infty,1/\lambda_-(0)]\to [0,+\infty)$  is a decreasing bijection, $\Theta_{(1/\lambda_-(0),-1]\cup (1,1/\lambda_+(0))}$ is strictly negative, $\Theta_{[1/\lambda_+(0),\infty)}:[1/\lambda_+(0),\infty)\to [0,\infty)$ is an increasing bijection, $1/\lambda_-=(\Theta_{(-\infty,1/\lambda_-(0)]})^{-1}$ and $1/ \lambda_+=(\Theta_{[1/\lambda_+(0),\infty)})^{-1}$. See Figure~\ref{figure:nsf} for an illustration. 
\item We also see from Theorem~\ref{theorem}\eqref{theorem:c} that eventually (in $n\in \mathbb{N}\cup \{0\}$)  we will always be more likely to cross the level $n$ for the first time continuously rather than dis-continuously, \emph{no matter what the data}. Remark that if $\lambda(\{2\})$ is, \emph{ceteris paribus}, allowed to increase, we obtain that this does not necessarily hold for all $n\in \mathbb{N}\cup \{0\}$ (since we will be increasingly likely to cross over the level $1$ on the first jump by jumping to the level $2$)! 
\end{enumerate}
\end{remark}

\begin{figure}
                \includegraphics[width=\textwidth]{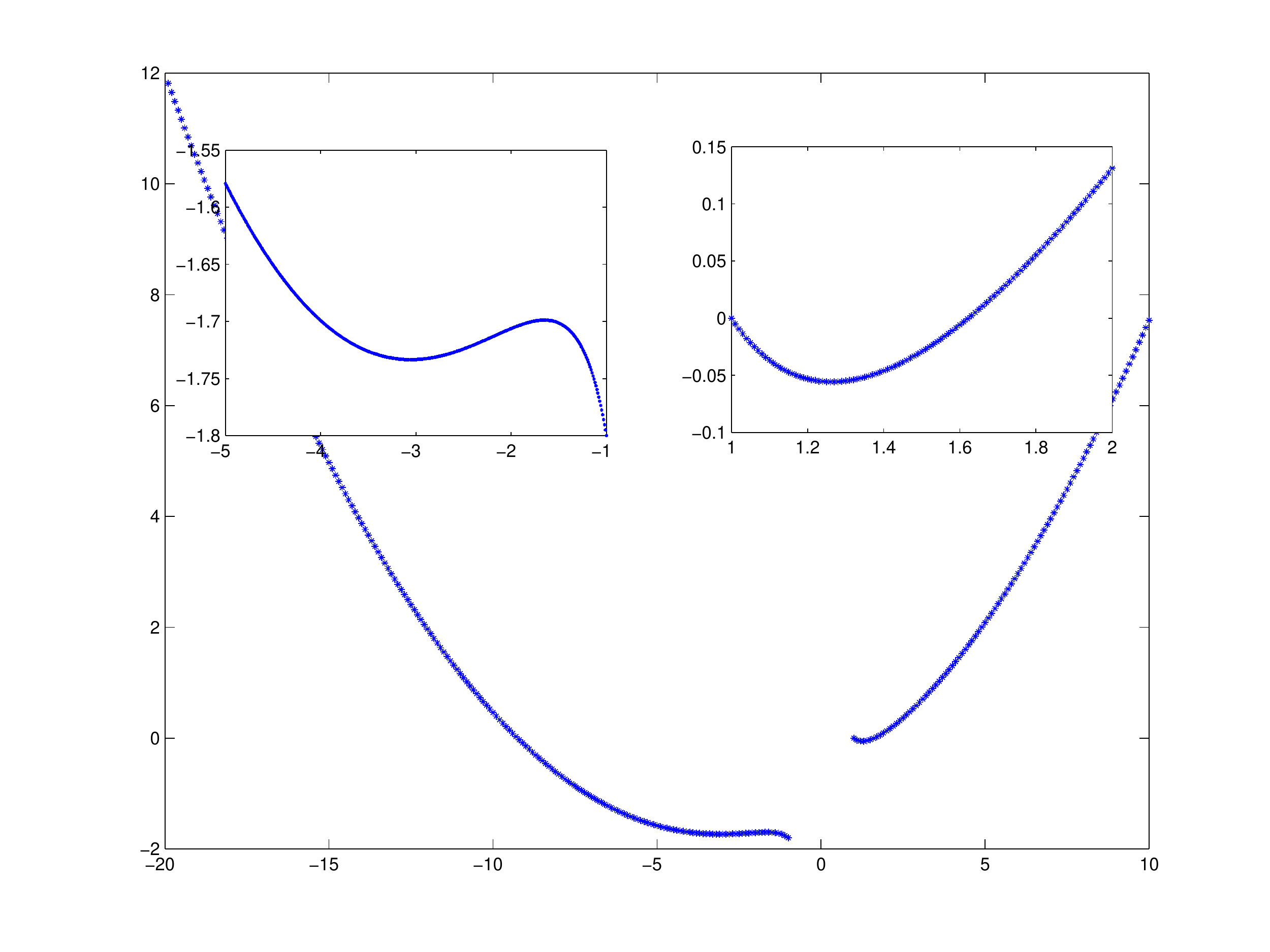}
\caption{The function $\Theta$ for the L\'evy measure $\lambda=0.05\delta_2+0.35\delta_1+0.4\delta_{-1}+0.05\delta_{-2}+0.15\delta_{-3}$.  On $[1,\infty)$, $\Theta$ is strictly convex. Its behavior on the interval $(-\infty,-1]$ is not trivial (left inset).}\label{figure:nsf}
\end{figure}

\begin{corollary}\label{remark:remark}
Returning to the original setting of a random walk $(W_k)_{k\geq 0}$, with possibly $p_0\ne 0$, and with $p_i=0$ for $i>2$, assume $p_2\ne 0$ and $p_i\ne 0$ for some odd integer $i$. 

Define $\LL(\beta):=\sum_{i\in \mathbb{Z}}p_i\beta^{-i}$ for $\beta\in [-1,1]\backslash \{0\}$ and note that $\LL(1)=1$; $\LL\vert_{(0,1]}$ is strictly convex; $\lim_{0+}\LL=+\infty$. Besides $1$ there is hence at most one other zero of $\LL-1$ on $(0,1]$, the smaller of the two is denoted 
$\alpha(1)$. It is clear that $\LL\vert_{(0,\alpha(1)]}:(0,\alpha(1)]\to [1,\infty)$ is a decreasing bijection, and we let $\alpha$ be its inverse. 

Finally, introduce $\tilde{T}_n:=\inf\{k\geq 0:W_k\geq n\}$ ($n\in\mathbb{N}\cup \{0\}$). 
\begin{enumerate}[(a)]
\item For any $\gamma\geq 1$ with $\alpha(\gamma)<1$, we have for all $n\in\mathbb{N}\cup \{0\}$: $$\EE[\gamma^{-\tilde{T}_n}\mathbbm{1}(\tilde{T}_n<\infty)]=\frac{1-\tilde{\lambda}_-(\gamma)}{\tilde{\lambda}_+(\gamma)-\tilde{\lambda}_-(\gamma)}\tilde{\lambda}_+(\gamma)^{n+1}-\frac{1-\tilde{\lambda}_+(\gamma)}{\tilde{\lambda}_+(\gamma)-\tilde{\lambda}_-(\gamma)}\tilde{\lambda}_-(\gamma)^{n+1}$$ where $\tilde{\lambda}_\pm(\gamma)$ is the unique zero of $\LL-\gamma$ on $\pm (0,1)$. Further, $\tilde{\lambda}_+(\gamma)=\alpha(\gamma)$ and the following inequalities hold: $-\tilde{\lambda}_+(\gamma)<\tilde{\lambda}_-(\gamma)<0<\tilde{\lambda}_+(\gamma)<1$. 

If $\alpha(1)=1$, then for each $n\in\mathbb{N}\cup \{0\}$, $\PP(\tilde{T}_n<\infty)=1$.
\item For each $n\geq 0$: $\PP(W(\tilde{T}_n)=n,\tilde{T}_n<\infty)=\frac{1}{\tilde{\lambda}_+(1)-\tilde{\lambda}_-(1)}(\tilde{\lambda}_+(1)^{n+1}-\tilde{\lambda}_-(1)^{n+1})$ and $\PP(W(\tilde{T}_n)=n+1,\tilde{T}_n<\infty)=\frac{-\tilde{\lambda}_+(1)\tilde{\lambda}_-(1)}{\tilde{\lambda}_+(1)-\tilde{\lambda}_-(1)}(\tilde{\lambda}_+(1) ^n-\tilde{\lambda}_-(1)^n)$. Here $\tilde{\lambda}_+(1)=\alpha(1)$ is the smallest zero of $\LL-1$ on $(0,1]$  and $\tilde{\lambda}_-(1)$ is the unique zero of $\LL-1$ on $(-1,0)$.  It holds: $-\tilde{\lambda}_+(1)<\tilde{\lambda}_-(1)<0<\tilde{\lambda}_+(1)\leq 1$. 
\item We have: $$\lim_{n\to\infty}\frac{\PP(W(\tilde{T}_n)=n+1\vert \tilde{T}_n<\infty)}{\PP(W(\tilde{T}_n)=n\vert \tilde{T}_n<\infty)}=-\tilde{\lambda}_-(1)\in (0,1).$$
\item $W$ drifts to $+\infty$, oscillates or drifts to $-\infty$, according as to whether $\LL'(1-)<0$, $\LL'(1-)=0$ or $\LL'(1-)>0$. 
\end{enumerate}
\end{corollary}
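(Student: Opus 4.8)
The plan is to derive everything from a one-step (Markov / strong-Markov) analysis of the first passage, exploiting that upward jumps are only of size $1$ or $2$, so the overshoot at $T_n$ is either $0$ or $1$. I would begin by introducing, for $q\ge 0$ with $\Phi(q)>0$, the generating-function-valued quantities $a_n:=\EE[e^{-qT_n}\mathbbm 1(T_n<\infty,\,X(T_n)=n)]$ and $b_n:=\EE[e^{-qT_n}\mathbbm 1(T_n<\infty,\,X(T_n)=n+1)]$, so that the sought transform is $a_n+b_n$. The key structural fact is a \emph{renewal-type decomposition}: to reach level $n$ (for $n\ge 1$) the process must first hit level $n-1$ or overshoot it to $n$; conditioning on the first of these events and using the strong Markov property and spatial homogeneity, one obtains a linear recursion expressing $(a_n,b_n)$ in terms of $(a_{n-1},b_{n-1})$ and $(a_{n-2},b_{n-2})$ — more precisely, passing from level $0$ over to level $1$ happens either by landing exactly on $1$ (contributing to $a$-type) or by jumping to $2$ (contributing to $b$-type, i.e. overshoot), and from there the process continues afresh. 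Equivalently, writing $f(\beta):=\EE[\beta^{X(T_0)}e^{-qT_0}\mathbbm 1(T_0<\infty)]$ (a \emph{two-point} random variable supported on $\{0,1\}$, since the walk starts at $0$ and the first strict passage above $0$ lands at $0$ or $1$ — here one must be slightly careful because $X_0=0$ already, so $T_0=0$; the genuine object is passage from level $-1$, or one shifts to $T_1$), the full transform $\EE[e^{-qT_n}\mathbbm 1(T_n<\infty)] = u_n + v_n$ where $(u_n,v_n)$ satisfies a second-order linear recurrence with characteristic roots $\lambda_+(q),\lambda_-(q)$.

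Next I would identify these roots. The natural route is a \emph{martingale / optional-stopping} argument: for suitable $\beta\in\RR$, $e^{-qt}\beta^{X_t}$ is a (local/super) martingale precisely when $\Theta(\beta)=q$ (since $\EE[\beta^{X_t}]=e^{t\Theta(\beta)}$ when $\supp\lambda$ is bounded above, which it is here as $\supp(\lambda|_{(0,\infty)})\subset\{1,2\}$), and applying optional stopping at $T_n$ (with the usual truncation/uniform-integrability care, which is where a real estimate is needed) gives $\beta^n(\text{something involving }a_n,b_n) = $ (value at time $0$) $= \beta^{X_0}=1$ after appropriate normalization. Choosing $\beta=1/\lambda_+(q)$ and $\beta=1/\lambda_-(q)$, the two roots of $\Theta-q$ outside $[-1,1]$, yields two linear equations for $a_n,b_n$ (or for $u_n,v_n$), whose solution is exactly the closed form in part~(a). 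One must verify $\Theta-q$ has exactly one zero in $(1,\infty)$ and exactly one in $(-\infty,-1)$: the former from strict convexity of $\Theta$ on $[1,\infty)$ together with $\Theta(1)=0$, $\psi(\beta)=\Theta(e^\beta)$ and $\Phi(q)>0$; the latter needs the hypothesis $\supp(\lambda)\not\subset 2\ZZ$ (otherwise $\Theta$ is even in a suitable sense and the negative behaviour degenerates), via a sign/limit analysis of $\Theta$ on $(-\infty,-1]$ — this is the slightly delicate, non-obvious point flagged in the paper's own phrasing. The inequalities $-\lambda_+(q)<\lambda_-(q)<0<\lambda_+(q)<1$ then follow by comparing $|\Theta|$ at $1/\lambda_+(q)$ and $-1/\lambda_+(q)$ and using $q\ge 0$, $\Phi(q)>0$.

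Part~(b) is the $q\downarrow 0$ (equivalently $q=0$) specialization, the only subtlety being that when $\Phi(0)=0$ one has $\lambda_+(0)=1$ and must check the $a_n,b_n$ formulas remain valid (continuity of $\Theta$ and of the roots, plus $\PP(T_n<\infty)=1$ which follows because then $e^{-qT_n}\mathbbm 1(T_n<\infty)\to \mathbbm 1(T_n<\infty)$ and the transform tends to $\lambda_+(0)^{n+1}\cdot(\dots)+\dots=1$ — a short computation using $l_+(0)=1$). Separating the continuous-crossing and overshoot parts: from the martingale identity with $\beta=1/\lambda_\pm(0)$ one again gets two equations, now for $\PP(X(T_n)=n,T_n<\infty)$ and $\PP(X(T_n)=n+1,T_n<\infty)$ directly, solved to give the stated expressions. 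Part~(c) is then immediate algebra: the ratio $\PP(X(T_n)=n+1\mid T_n<\infty)/\PP(X(T_n)=n\mid T_n<\infty)$ equals $-\lambda_+(0)\lambda_-(0)\,(\lambda_+(0)^n-\lambda_-(0)^n)/(\lambda_+(0)^{n+1}-\lambda_-(0)^{n+1})$, and since $|\lambda_-(0)|<\lambda_+(0)$ (from the inequalities in (b)), dividing through by $\lambda_+(0)^{n}$ and letting $n\to\infty$ gives the limit $-\lambda_-(0)\cdot\frac{1-0}{1-0}=-\lambda_-(0)$ when $\lambda_+(0)=1$, and more generally $-\lambda_-(0)$ after cancelling $\lambda_+(0)$; membership in $(0,1)$ is exactly $\lambda_-(0)\in(-1,0)$, already established. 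Part~(d) is not really part of the new analysis: it is the standard Lévy-process dichotomy (drift to $\pm\infty$ vs.\ oscillation according to the sign of $\EE[X_1]=\psi'(0+)$), quoted from Sato; I would simply note $\psi'(0+)=\int x\,\lambda(dx)$ is the mean of the step and invoke the cited Proposition~37.10.

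The main obstacle I anticipate is twofold: (i) the careful justification of optional stopping at $T_n$ for the (super)martingale $e^{-qt}\beta^{X_t}$ when $|\beta|>1$ — one needs either $q>0$ strictly (then exponential decay handles it) or, for $q=0$, a separate argument controlling $\beta^{X_{t\wedge T_n}}$ from above using that $X_{t\wedge T_n}\le n+1$ on the relevant event and $\liminf X_t=-\infty$ or convergence depending on drift; and (ii) the analysis of $\Theta$ on $(-\infty,-1)$, establishing existence and uniqueness of the negative root and the strict inequality $-\lambda_+ < \lambda_-$, which genuinely uses the hypothesis $\supp(\lambda)\not\subset 2\ZZ$ and does not follow from convexity (indeed $\Theta$ need not be monotone or convex there, as the paper's Figure~\ref{figure:nsf} emphasizes). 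Everything else — the recursion, the linear algebra, the passage to the discrete-time Corollary~\ref{remark:remark} via $\gamma=e^{q}\cdot(\text{time rescaling})$ and $\LL(\beta)=\Theta(1/\beta)+1$ type identifications — should be routine.
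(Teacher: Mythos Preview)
Your overall architecture matches the paper's: a strong-Markov recursion for the pair $(p_n^0(q),p_n^1(q))$ (your $a_n,b_n$), solved explicitly in terms of the two characteristic roots $\lambda_\pm(q)$, followed by identification of those roots via optional stopping applied to $(\beta^{X_t}e^{-qt})_{t\ge 0}$ for $\Theta(\beta)=q$; the discrete-time Corollary is then obtained by the compound-Poisson embedding, exactly as the paper does. Your handling of obstacle~(i) (uniform integrability at $T_n$) is also essentially the paper's: truncate at $T_n\wedge T$, use $X(T_n\wedge T)\le n+1$ and the fact that on $\{T_n=\infty\}$ the process drifts to $-\infty$, then let $T\to\infty$ by dominated convergence.

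Where your plan has a genuine gap is obstacle~(ii), the uniqueness of the zero of $\Theta-q$ on $(-\infty,-1)$. You propose a ``sign/limit analysis of $\Theta$ on $(-\infty,-1]$'', but the paper explicitly says this analytic route is not clear (and Figure~\ref{figure:nsf} shows $\Theta$ can be non-monotone there). The paper's resolution is probabilistic and rather neat, and you have all the ingredients for it but do not put them together: take \emph{any} zero $R$ of $\Theta-q$ on $(-\infty,-1)$, apply optional stopping to the martingale $(R^{X_t}e^{-qt})_{t\ge 0}$ at $T_n$ (same dominated-convergence justification as above), and obtain $p_n^0(q)+p_n^1(q)\,R=R^{-n}$ for every $n\ge 0$. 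But the left-hand side is, by the already-solved recursion, a linear combination of $\lambda_+(q)^n$ and $\lambda_-(q)^n$; hence $1/R\in\{\lambda_+(q),\lambda_-(q)\}$, and since $R<-1$ this forces $R=1/\lambda_-(q)$. So uniqueness comes \emph{from} the recursion, not from analytic properties of $\Theta$.

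Two smaller points. First, the inequalities $-\lambda_+(q)<\lambda_-(q)<0<\lambda_+(q)\le 1$ drop out immediately from the quadratic formula $\lambda_\pm(q)=\tfrac{p_1^0(q)}{2}\pm\sqrt{(p_1^0(q)/2)^2+p_1^1(q)}$ together with $p_1^0(q),p_1^1(q)>0$ and $p_1^0(q)+p_1^1(q)\le 1$; there is no need to compare values of $\Theta$. Second, the paper identifies $\lambda_-(q)$ as a zero of $\Theta-q$ not by a second optional-stopping identity but by plugging the solved form of $p_n(q)$ into the one-step (first-jump) decomposition $(1+q)p_n(q)=\sum_k\lambda(\{k\})p_{n-k}(q)$; your route via two martingale identities would also work, once uniqueness is in hand.
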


We conclude this section by further specifying the setting and fixing some relevant notation. Thus we allow $e_1$ to be an exponential, mean $1$, random variable, independent of $X$; then define $e_q:=e_1/q$ (for $q\in (0,\infty)\backslash \{1\}$) and agree $e_0=+\infty$, $\PP$-a.s. We also introduce the following quantities related to the laws of the overshoots (with $q\geq 0$): $\mu_x^q(A):=\EE[\mathbbm{1}(T_x<e_q)\mathbbm{1}_A\circ (X(T_x)-x)]$ (for $x\in\mathbb{R}$ and $A\in\mathcal{B}(\mathbb{R})$), whilst $p_n^i(q):=
\mu_n^q(\{i\})$ and $p_n(q):=
\mu_n^q(\mathbb{R})
$ (for $\{n,i\}\subset \mathbb{N}\cup \{0\}$). 

\section{Proof of theorem}\label{section:proof}
We return to the setting as described in Section~\ref{section:setting}, prior to stating Theorem~\ref{theorem}. The following simple proposition will give all the necessary 
ingredients towards the proof of Theorem~\ref{theorem}.

\begin{proposition}\label{prop:miscellaneous}
The following hold: 
\begin{enumerate}[(i)]
\item\label{prop:i} If $\supp (\lambda)$ is bounded from above, then: $\int \mu_x^q(dz)e^{\Phi(q)z}=e^{-\Phi(q)x}$ ($x\in\mathbb{R}$, $q\geq 0$). 
\item\label{prop:ii} $\mu_{x+y}^q(A)=\int \mu_x^q(du)\mu_{y-u}^q(A)$  ($\{x,y\}\subset [0,\infty)$, $q\geq 0$, $A\in\mathcal{B}(\mathbb{R})$).
\item\label{prop:iii} $\mu_x^q(A)=\mathbbm{1}_{(0,\infty)}(x)\frac{1}{1+q}\int \lambda(dv)\mu_{x-v}^q(A)+\mathbbm{1}_{(-A)\cap (-\infty,0]}(x)$ ($x\in\mathbb{R}$, $q\geq 0$, $A\in\mathcal{B}(\mathbb{R})$). 
\suspend{enumerate}
Moreover, if $X$ is `nearly right-continuous', i.e. $\supp (\lambda\vert_{\mathcal{B}((0,\infty))})\subset \{1,2\}$, then: 
\resume{enumerate}[{[(i)]}]
\item\label{prop:iv} For each $q\geq 0$, the sequences $(p_n^0(q))_{n\in\mathbb{N}\cup \{0\}}$ and $(p_n^1(q))_{n\in\mathbb{N}\cup \{0\}}$ satisfy the following system of linear difference equations:
\begin{eqnarray}
\label{recursion:one} p_{n+1}^0(q)&=&p_n^0(q)p_1^0(q)+p_n^1(q)\\
\label{recursion:two} p_{n+1}^1(q)&=&p_n^0(q)p_1^1(q)
\end{eqnarray}
($n\in\mathbb{N}\cup \{0\}$). 
\end{enumerate}
\end{proposition}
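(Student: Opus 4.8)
The plan is to prove each of the four items of Proposition~\ref{prop:miscellaneous} in turn, using standard L\'evy-process arguments (strong Markov property, optional stopping, Wald-type identities) together with the defining `nearly right-continuous' structure.

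For item~\eqref{prop:i}, I would apply the optional stopping theorem to the exponential martingale $M_t := e^{\Phi(q)X_t - q t}$ (which has constant expectation $1$ because $\psi(\Phi(q)) = q$) at the stopping time $T_x \wedge e_q$, or equivalently work with $T_x \wedge t$ and then let $t \to \infty$ after a separate killing argument for $e_q$. Boundedness from above of $\supp(\lambda)$ forces a uniform upper bound on the overshoot $X(T_x) - x$, so $e^{\Phi(q)X(T_x)}$ is bounded on $\{T_x < \infty\}$, which is exactly what is needed to justify uniform integrability and pass to the limit; on $\{T_x = \infty\}$ the contribution vanishes since $q > 0$ forces $T_x < e_q$ with the right weight, and when $q=0$ one needs $\Phi(0)>0$, i.e. $X$ does not drift to $+\infty$ too fast — actually here $x \ge 0$ is not assumed, but the martingale identity $\EE[M_{T_x \wedge e_q}] = 1$ together with the definition of $\mu_x^q$ yields precisely $\int \mu_x^q(dz) e^{\Phi(q)z} = e^{-\Phi(q)x}$. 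Item~\eqref{prop:ii} is the strong Markov property at $T_x$: to pass above level $x+y$ starting from $0$, the walk must first pass above $x$, landing at $x + u$ where $u = X(T_x) - x \ge 0$ is distributed (with the killing weight) according to $\mu_x^q$, and then, by spatial homogeneity and the lack of memory of $e_q$ (here one uses that the residual lifetime of $e_q$ after $T_x$, on $\{T_x < e_q\}$, is again $e_q$-distributed and independent), it must pass above the remaining level $y - u$ from scratch; integrating over $u$ gives the convolution identity, with the convention that $\mu_{y-u}^q$ for $y - u \le 0$ accounts for the case where landing at $x+u$ already overshoots $x+y$.

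For item~\eqref{prop:iii} I would condition on the first jump of the compound Poisson process $X$: with probability $1/(1+q)$ (after the race between the first arrival, rate $1$, and the killing clock $e_q$) the walk makes its first jump of size $v \sim \lambda$ before being killed, moving to $v$, from where by the strong Markov property it continues as a fresh copy and must pass above $x - v$; if $x \le 0$ then level $x$ is already attained at time $0$ with overshoot $-x$, contributing the term $\mathbbm{1}_{(-A) \cap (-\infty,0]}(x)$ (this captures $-x \in A$). The only subtlety is bookkeeping between the $x > 0$ and $x \le 0$ cases and making sure the killing weight $1/(1+q)$ — the probability that an $\Exp(1)$ arrival beats an independent $\Exp(q)$ clock — is inserted correctly; this is routine. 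Item~\eqref{prop:iv} then specializes \eqref{prop:iii} under $\supp(\lambda\vert_{\mathcal{B}((0,\infty))}) \subset \{1,2\}$: for integer $n \ge 0$ the overshoot $X(T_n) - n$ lies in $\{0,1\}$, so $p_n^0(q) + p_n^1(q) = p_n(q)$, and evaluating \eqref{prop:ii} with $x = n$, $y = 1$ (or $x = 1$, $y = n$) and the observation that $\mu_1^q$ is supported on $\{0,1\}$ gives, splitting on whether the overshoot at level $1$ is $0$ (then we still need to cross the remaining unit, landing at $n$ or $n+1$) or $1$ (then level $n+1$ is already reached, overshoot $0$ at level $n+1$), the two recursions: $p_{n+1}^0(q) = p_n^0(q) p_1^0(q) + p_n^1(q) \cdot 1$ and $p_{n+1}^1(q) = p_n^0(q) p_1^1(q)$.

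I expect item~\eqref{prop:i} to be the main obstacle, because it is the only place requiring a genuine limiting/uniform-integrability argument: one must carefully handle the stopped martingale $M_{T_x \wedge t}$, show the contribution on $\{T_x = \infty\}$ (where $X \to -\infty$ if $\Phi(q) > 0$ and $q=0$, or where the killing has already fired if $q>0$) vanishes in the limit, and exploit the upper bound on overshoots to control $e^{\Phi(q) X(T_x)}$ — the rest of the proposition is soft, relying only on the strong Markov property, spatial homogeneity, and the memorylessness of the exponential killing time.
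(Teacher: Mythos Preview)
Your proposal is correct and follows essentially the same route as the paper: optional stopping with the exponential martingale $e^{\Phi(q)X_t-qt}$ for \eqref{prop:i}, the strong Markov property together with the memorylessness of $e_q$ for \eqref{prop:ii}, conditioning on the first jump (beating the killing clock with probability $1/(1+q)$) for \eqref{prop:iii}, and the decomposition at $T_n$ for \eqref{prop:iv}. The one place where the paper is slightly more explicit is the boundary case $q=0$, $\Phi(0)=0$ in \eqref{prop:i}: rather than arguing that the identity is then trivial, the paper obtains it by letting $q\downarrow 0$ in the already-established identity for $\Phi(q)>0$ and invoking the continuity of $\Phi$ together with dominated convergence.
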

%
\begin{proof}
Applying Optional Sampling Theorem to the bounded stopping time $T_x\land T$ ($T\geq 0$) and the martingale $(\exp\{\Phi(q)X_t-qt\})_{t\geq 0}$, 
we obtain $\EE[\exp\{\Phi(q)X(T_x\land T)-q(T_x\land T)\}]=1$. Now let $T\to\infty$. Suppose first $\Phi(q)>0$. Then by dominated convergence, on account of the boundedness of $\supp (\lambda\vert_{\mathcal{B}((0,\infty))})$, $\EE[\exp\{\Phi(q)X(T_x)-qT_x\}\mathbbm{1}(T_x<\infty)]=1$, since a.s. on the event $\{T_x=\infty\}$, $X$ drifts to $-\infty$. Further, we can write the equality obtained, by using $e_q$, like so: 
\begin{equation}\label{prop:proof}
\EE[\exp\{\Phi(q)(X(T_x)-x)\}\mathbbm{1}(T_x<e_q)]=\exp\{-\Phi(q)x\}
\end{equation}
Suppose now $\Phi(q)=0$. Then $q=0$. Letting $q\downarrow 0$ in \eqref{prop:proof}, using continuity of $\Phi$  and dominated convergence, \eqref{prop:proof} extends to this case as well. This concludes the proof of item \eqref{prop:i}.

\eqref{prop:ii} is the strong Markov property and the memoryless property of the exponential distribution, coupled with the independence of $X$ and $e_q$. 

\eqref{prop:iii} follows for the same reasons, by conditioning on the first jump time of the process $X$ (the probability of seeing the latter before $e_q$ is $1/(1+q)$). 

Finally, to obtain \eqref{prop:iv}, condition on $T_n$, noting that in order to cross the level $n+1$, the random walk must first have crossed the level $n$, which it did either continuously or not. 
\end{proof}
Let us now apply the above to gain understanding of the `nearly right-continuous' random walk. We assume henceforth $\supp (\lambda\vert_{\mathcal{B}((0,\infty))})\subset \{1,2\}$. 
\begin{remark}
Suppose furthermore $\lambda(\{2\})=0$ for the right-continuous case. Then Proposition~\ref{prop:miscellaneous}\eqref{prop:i} yields at once $\PP(T_n<e_q)=e^{-\Phi(q)n}$, i.e. $\EE[e^{-q T_n}\mathbbm{1}(T_n<\infty)]=e^{-\Phi(q)n}$, for all $n\in\mathbb{N}\cup \{0\}$, $q\geq 0$.
\end{remark}
We now assume $\lambda(\{2\})>0$. If $\lambda$ is supported by $2\ZZ$, this is just the right-continuous case, but on the lattice $2\ZZ$. So without loss of generality take the converse case. In particular, it follows that $p_1^1(q)p_1^0(q)>0$ for all $q\geq 0$. 

In the first step towards the proof of Theorem~\ref{theorem}, we solve the recursion system of Proposition~\ref{prop:miscellaneous}\eqref{prop:iv}. Simply plug \eqref{recursion:two} into \eqref{recursion:one} to get (for any $q\geq 0$): $$p_{n+2}^0(q)-p_{n+1}^0(q)p_1^0(q)-p_n^0(q)p_1^1(q)=0,\quad n\in\mathbb{N}\cup \{0\}.$$ 
The characteristic polynomial of this last recursion is (in the dummy variable $\zeta$) $\zeta^2-p_1^0(q)\zeta-p_1^1(q)$, with the zeros: $$\lambda_\pm(q):=\frac{p_1^0(q)}{2}\pm \sqrt{\left(\frac{p_1^0(q)}{2}\right)^2+p_1^1(q)}.$$ Note that $-\lambda_+(q)<\lambda_-(q)<0<\lambda_+(q)\leq 1$ (the last inequality follows from $p_1^0(q)+p_1^1(q)\leq 1$). 

Next, the initial values $p_0^i(q)=\delta_{0i}$, $i\in \{0,1\}$, yield immediately for all $n\in\mathbb{N}\cup \{0\}$, the following expressions: 
\begin{eqnarray}
\label{eq:3}p_n^0(q)&=&\frac{1}{\lambda_+(q)-\lambda_-(q)}(\lambda_+(q)^{n+1}-\lambda_-(q)^{n+1})\, \text{  and}\\
\label{eq:4}p_n^1(q)&=&\frac{-\lambda_+(q)\lambda_-(q)}{\lambda_+(q)-\lambda_-(q)}(\lambda_+(q) ^n-\lambda_-(q)^n),\text{ hence}\\
\label{eq:5}p_n(q)&=&\frac{-\lambda_+(q)\lambda_-(q)+\lambda_+(q)}{\lambda_+(q)-\lambda_-(q)}\lambda_+(q)^n-\frac{-\lambda_+(q)\lambda_-(q)+\lambda_-(q)}{\lambda_+(q)-\lambda_-(q)}\lambda_-(q)^n.
\end{eqnarray}

In the second step, we characterize the values of $\lambda_+(q)$ and $\lambda_-(q)$, $q\geq 0$. First, Proposition~\ref{prop:miscellaneous}\eqref{prop:i} implies $p_n^0(q)+p_n^1(q)e^{\Phi(q)}=e^{-\Phi(q) n}$ (for all $n\in\mathbb{N}\cup \{0\}$), from which it follows immediately that $\lambda_+(q)=e^{-\Phi(q) }$. If so, then the relation appearing in Proposition~\ref{prop:miscellaneous}\eqref{prop:i} is \emph{a priori} satisfied and does not yield $\lambda_-(q)$.  Indeed, as far as Theorem~\ref{theorem}\eqref{theorem:a} is concerned, if $q=0$ and $\Phi(q)=0$, so that $\lambda_+(q)=1$, we don't need it to, since then \eqref{eq:5} gives immediately that $\PP(T_n<\infty)=1$ for all $n\in \mathbb{N}\cup \{0\}$. Consider on the other hand the case when $\Phi(q)>0$ and hence $\lambda_+(q)<1$ (i.e. $1/\lambda_+(q)\in (1,\infty)$). In this instance we resort to  Proposition~\ref{prop:miscellaneous}\eqref{prop:iii} with $A=\mathbb{R}$, which tells us that (for all $n\in\mathbb{N}$): $$(1+q)p_n(q)=\sum_{k\in\ZZ}\lambda(\{k\}) p_{n-k}(q).$$ Plugging in \eqref{eq:5}, this implies (since $\Theta(1/\lambda_+(q))=\Theta(e^{\Phi(q)})=q$ and $-\lambda_+(q)\lambda_-(q)+\lambda_-(q)\ne 0$): $$\Theta(1/\lambda_-(q))=q,$$ where we know $1/\lambda_-(q)\in (-\infty,-1)$. 

Now, from the Introduction, it is clear that $\Theta-q$ has a unique zero on $(1,\infty)$, namely $e^{\Phi(q)}$. To establish 
Theorem~\ref{theorem}\eqref{theorem:a}, it will then be sufficient to argue that $\Theta-q$ has at most one zero on $(-\infty,-1)$ for each $q\geq 0$. Fix $q\geq 0$; let $\tl$ be any such zero.

It does not seem immediately clear analytically why $R$ should be unique (cf. Figure~\ref{figure:nsf}); so we argue probabilistically.\footnote{However, for numerical reasons, we note that $\lim_{-\infty}\Theta=+\infty$ and $\Theta(-1)<0$.} The argument is essentially verbatim that of the proof of Proposition~\ref{prop:miscellaneous}\eqref{prop:i}. First, $(\tl^{X_t}e^{-qt})_{t\geq 0}$ is a martingale. Indeed $\EE[\vert \tl^{X_t}\vert]=\EE[\vert \tl\vert^{X_t}]=e^{t\Theta(\vert \tl\vert)}<\infty$ and  $\EE[\tl^{X_t}]=e^{t\Theta(\tl)}=e^{t q}$. The assertion then follows by stationary independent increments of $X$. Further, for any $T\geq 0$ and $n\in\mathbb{N}\cup\{0\}$, Optional Sampling Theorem yields $\EE[\tl^{X(T_n\land T)}e^{-q(T_n\land T)}]=1$. Letting $T\to\infty$ we deduce by dominated convergence (as $X(T_n\land T)\leq n+1$ and since on the event $\{T_n=\infty\}$ the process $X$ limits to $-\infty$, a.s.): $\EE[\tl^{X(T_n)}e^{-q T_n}\mathbbm{1}(T_n<\infty)]=1$, i.e. $\EE[\tl^{(X(T_n)-n)}\mathbbm{1}(T_n<e_q)]=\tl^{-n}$. Put still another way, $p_n^0(q)+p_n^1(q)\tl=\tl^{-n}$. Since the left-hand side is a linear combination of $(n\mapsto \lambda_+(q)^n)$ and $(n\mapsto \lambda_-(q)^n)$ it follows that:
\begin{quote}
$\tl\in \{1/\lambda_-(q),1/\lambda_+(q)\}$ and hence $\tl=1/\lambda_-(q)$.
\end{quote}
This establishes that $\tl$ is indeed unique and completes the proof of Theorem~\ref{theorem}\eqref{theorem:a}. 

Next, note that $\lambda_-:[0,\infty)\to (-1,0)$ is certainly continuous (from the right) at $0$. Then in the relation $\Theta(1/\lambda_-(q))-q=0$, $q>0$, we may let $q\downarrow 0$ and see that  $\Theta(1/\lambda_-(0))=0$. By the above argument it is clear that $1/\lambda_-(0)$ is hence the unique zero of $\Theta$ on $(-\infty,-1)$. From Section~\ref{section:setting}, we also know that $1/\lambda_+(0)=e^{\Phi(0)}$ is the largest zero of $\Theta$ on $[1,\infty)$. Thus Theorem~\ref{theorem}\eqref{theorem:b} follows directly from \eqref{eq:3} and \eqref{eq:4}. 


Theorem~\ref{theorem}\eqref{theorem:c} is a corollary to Theorem~\ref{theorem}\eqref{theorem:b}.

Let us now establish Theorem~\ref{theorem}\eqref{theorem:d}. First, $X$ drifts to $+\infty$ or oscillates, if any only if $\PP(T_1<\infty)=1$ (equivalently, $\PP(T_n<\infty)=1$, for all natural $n$). An elementary computation based on Theorem~\ref{theorem}\eqref{theorem:b} then implies this is further equivalent to $\Phi(0)=0$, i.e. $\psi'(0+)\geq 0$. Assume now $\Phi(0)=0$. Note $X$ drifts to $+\infty$, if and only if $\EE[T_1]<+\infty$ (equivalently, $\EE[T_n]<+\infty$, for all natural $n$) \cite[p. 172, Proposition VI.17]{bertoin}. But in \eqref{eq:5}, which gives $\EE[e^{-q T_n}\mathbbm{1}(T_n<\infty)]=p_n(q)$, we may, via monotone convergence, differentiate under the integral sign with respect to $q$ (at $q=0$ from the right), to obtain: $$\EE[T_1]=\lim_{q\downarrow 0}\frac{1-\lambda_+(q)}{q}(1-\lambda_-(q))=-\lambda_+'(0+)(1-\lambda_-(0))=\Phi'(0+)(1-\lambda_-(0)),$$ which is finite if and only if $\psi'(0+)>0$. 

Finally, note that Corollary~\ref{remark:remark} follows from Theorem~\ref{theorem}, by first extending the latter so that $\lambda(\mathbb{R})$ is not necessarily equal to $1$ (simply apply Theorem~\ref{theorem} to the process $X_{ \cdot/\lambda(\mathbb{R})}$, whose L\'evy measure \emph{has} mass one), then applying Theorem~\ref{theorem} to the process $X$ with L\'evy measure $(1-p_0)\sum_{n\in\mathbb{Z}\backslash \{0\}}\frac{p_n}{1-p_0}\delta_n$ (so that for each $n\in \mathbb{N}\cup \{0\}$, $T_n$ becomes equal in law to the independent sum of $\tilde{T}_n$ iid, exponential, mean one, random variables). \qed

\section{Applications in queues and branching processes}\label{section:application}
Right-continuous random walks are related (at least in the distributional sense) to certain quantities in the theory of queues, and branching processes, see e.g. \cite[Section 5]{pitman} for a nice exposition. For the `nearly right-continuous' case, we offer the following two examples in applications.

Consider first a single queue of customers with \emph{two} equally capable servers, the latter attending to the former simultaneously, two at a time, per service. There are a total of $k\geq 2$ individuals in the queue at the start, $Q_0:=k$. The time the servers are working consists of idle and busy periods, where we define an idle period as a period in which at least one of the servers has no customer to attend to (and then only one server performs the service, say, while the other one rests). Thus, each busy period consists of one or more services; one service per two customers. Let $Q_n$ denote the number of customers in the queue at the end of the $n$-th service. Assume that the number of individuals which arrive during each service is distributed according to the distribution function $F$ ($\mathrm{d}F$ supported by $\mathbb{N}\cup \{0\}$) and that arrivals during each service period are independent (in their number). Let $(X_i)_{i\geq 1}$ be an independency of random variables distributed according to $F$ and $(S_n)_{n\geq 0}$ be their partial sums. Consider the process $P_n:=k+S_n-2n$ ($n\geq 0$), which is to model $(Q_n)_{n\geq 0}$ up to the idle period. If we let $T_k:=\inf \{n\geq 0: P_n\leq 1\}$, then (with equality in distribution) $T_k$ is the total number of services during the first busy period; accordingly it is also the time to the first idle period, and $2T_k$ is the number of customers served during the first busy period. 

Crucially, $(S_n-2n)_{n\geq 0}$ is nothing else than the negative of a `nearly right-continuous' random walk, and $T_k$ are precisely its first passage times. Note how e.g. Theorem~\ref{theorem}\eqref{theorem:c} implies that the servers will, at least for all sufficiently large $k$, always be more likely to end up in their first idle period with both being able to rest, rather than just one of them getting a chance to do so (assuming, for example, $\mathrm{d}F(\{0\})\mathrm{d}F(\{1\})>0$). 

As our second application, note that we can also find in the above queue an example of what is essentially (but not quite) a Galton-Watson branching process for \emph{paired} individuals, in the following precise sense. Consider having a totality of $k\geq 2$ initial ancestors in the $0$-th generation, which reproduce in pairs, each pair giving young to a certain number of descendants of the next generation, independently (in offspring number), and according to the distribution $F$. All the pairs are assumed disjoint. Note in each generation there is of course the possibility of having an individual, which cannot be paired up. How precisely to treat him will soon become clear, once the connection to the above has been established. 

Now, we say the population becomes extinct if there are no longer two individuals present (in the current generation), which can pair up and reproduce. Then in the above (with equality in distribution, and as an approximation) $2 T_k$ represents the total progeny (modulo, possibly one member) until extinction has occurred (interpret customer $j$ a child of $j'j''$, if $j$ has arrived during the service of $j'j''$). The approximation is in that if the total number of individuals in a generation $K$ is odd, then the left-over specimen $j'$ in generation $K$ can reproduce with a member $j''$ of generation $K+1$ (provided, of course the left over pairs of generation $K$ have produced any progeny). In that case, if any progeny occurs between $j'$ and $j''$, it is assumed to be added to the generation $K+2$, which follows the oldest generation of this pair. Of course then $j''$ is no longer available for reproduction in generation $K+1$. This continues until there are still individuals available to reproduce. 

Indeed, if the branching process is defined in this latter sense (so a Galton-Watson branching for pairs, with suitable boundary conditions dealing with the possibility of having an odd number of individuals available (left over) for reproduction in the current generation), then the correspondence is exact and $2 T_k$ represents the total progeny (modulo, possibly, one member) until extinction has occurred. Theorem~\ref{theorem}\eqref{theorem:d}, for example, then gives information on whether the total progeny will be a.s. finite or not/of finite expectation or not. 

\section{Conclusion}\label{section:conclusion}
It would be interesting to see what (if anything definitive) can be said, when the jumps of $X$ are allowed upwards up to a certain (fixed, but arbitrary) threshold $N\in\mathbb{N}$ (we had $N=2$, $N=1$ being the skip-free case). This remains open to future research. 
\bibliographystyle{plain}
\bibliography{Biblio_first_passage_nearly_rightcts_RW}

\begin{thebibliography}{10}

\bibitem{bertoin}
J.~Bertoin.
\newblock {\em L{\'e}vy Processes}.
\newblock Cambridge Tracts in Mathematics. Cambridge University Press,
  Cambridge, 1996.

\bibitem{brown}
M.~Brown, E.~A. Pek{\"o}z, and S.~M. Ross.
\newblock Some results for skip-free random walk.
\newblock {\em Probability in the Engineering and Informational Sciences},
  24:491--507, 2010.

\bibitem{dickson}
D.~C.~M. Dickson and H.~R. Waters.
\newblock Recursive calculation of survival probabilities.
\newblock {\em ASTIN Bulletin}, 21(2):199--221, 1991.

\bibitem{doney}
R.~A. Doney and J.~Picard.
\newblock {\em Fluctuation theory for {L\'e}vy processes: Ecole d'Et{\'e} de
  Probabilit{\'e}s de Saint-Flour XXXV-2005}.
\newblock Number 1897 in Ecole d'Et{\'e} de Probabilit{\'e}s de Saint-Flour.
  Springer-Verlag, Berlin Heidelberg, 2007.

\bibitem{kyprianou}
A.~E. Kyprianou.
\newblock {\em {Introductory Lectures on Fluctuations of {L\'e}vy Processes
  with Applications}}.
\newblock Springer-Verlag, Berlin Heidelberg, 2006.

\bibitem{pitman}
J.~Pitman.
\newblock Enumerations of trees and forests related to branching processes and
  random walks.
\newblock In {\em Microsurveys in Discrete Probability, number 41 in DIMACS
  Ser. Discrete Math. Theoret. Comp. Sci}, pages 163--180, 1997.

\bibitem{quine}
M.~P. Quine.
\newblock On the escape probability for a left or right continuous random walk.
\newblock {\em {Annals of Combinatorics}}, 8:221--223, 2004.

\bibitem{sato}
K.~I. Sato.
\newblock {\em {L\'e}vy Processes and Infinitely Divisible Distributions}.
\newblock Cambridge studies in advanced mathematics. Cambridge University
  Press, Cambridge, 1999.

\bibitem{spitzer}
F.~Spitzer.
\newblock {\em Principles of Random Walk}.
\newblock Graduate texts in mathematics. Springer, 2001.

\bibitem{vidmar:fluctuation}
M.~Vidmar.
\newblock {Fluctuation theory for upwards skip-free L\'evy chains}.
\newblock arXiv:1309.5328, 2013.

\bibitem{vidmar}
M.~Vidmar.
\newblock Non-random overshoots of {L}\'evy processes.
\newblock arXiv:1301.4463, 2013.

\bibitem{vylder}
F.~De Vylder and M.~J. Goovaerts.
\newblock Recursive calculation of finite-time ruin probabilities.
\newblock {\em Insurance: Mathematics and Economics}, 7(1):1--7, 1988.

\end{thebibliography}

\end{document}